\newtheorem{thm}{Theorem}[section]
\newtheorem{lem}[thm]{Lemma}
\theoremstyle{definition}
\newtheorem{defn}[thm]{Definition}
\newtheorem{example}[thm]{Example}
\newtheorem{problem}[thm]{Question}
\newtheorem{cor}[thm]{Corollary}
\newtheorem{conjecture}[thm]{Conjecture}
\theoremstyle{remark}
\newtheorem{rem}[thm]{Remark}
\numberwithin{equation}{section}
\begin{document}

% \title[short text for running head]{full title}
\title{Product of difference sets of set of primes \footnote{ Keywords: Difference set of Primes, Sum product estimates\\ Mathematics subject classification: Primary 37A45; Secondary 11E25, 11T30, 05D10}}

%    Only \author and \address are required; other information is
%    optional.  Remove any unused author tags.

%    author one information
 %\author[short version for running head]{name for top of paper}
\author{Sayan Goswami \\ The Institute of Mathematical Sciences\\ A CI of Homi Bhabha National Institute\\ CIT Campus, Taramani, Chennai 600113, India}
\maketitle
%\address{}
%\curraddr{}

%\thanks{}

%    author two information
%\author{}
%\address{}
%\curraddr{}
%\email{}
%\thanks{}

%    \subjclass is required.
%\subjclass[2020]{Primary 37A45; Secondary 11E25, 11T30, 05D10 }

%\date{}

%\dedicatory{}

%    "Communicated by" -- provide editor's name; required.
%\commby{}

%    Abstract is required.
%\maketitle
\begin{abstract}
In a recent work \cite{key-11}, A. Fish proved that if $E_{1}$ and
$E_{2}$ are two subsets of $\mathbb{Z}$ of positive upper Banach
density, then there exists $k\in\mathbb{Z}$ such that $k\cdot\mathbb{Z}\subset\left(E_{1}-E_{1}\right)\cdot\left(E_{2}-E_{2}\right).$
In this article we will show that a similar result is true for the
set of primes $\mathbb{P}$ (which has density $0$). We will prove
that there exists $k\in\mathbb{N}$ such that $k\cdot\mathbb{N}\subset\left(\mathbb{P}-\mathbb{P}\right)\cdot\left(\mathbb{P}-\mathbb{P}\right),$
where $\mathbb{P}-\mathbb{P}=\left\{ p-q:p>q\,\text{and}\,p,q\in\mathbb{P}\right\} .$
\end{abstract}

\section{Introduction}

Estimating the size of sums and products of subets of $\mathbb{Z}$
are widely studied in Additive combinatorics. For any two sets $A,B\subset\mathbb{Z}\,\text{(or }\mathbb{R}\text{)}$,
define $A+B$ and $A\cdot B$ as $A+B=\left\{ a+b:a\in A,b\in B\right\} $
and $A\cdot B=\left\{ a\cdot b:a\in A,b\in B\right\} .$ The following
conjecture of Erd\H{o}s and Szemer\'{e}di \cite{key-9.1} is one
of the central problem on sum-product estimates. 
\begin{conjecture}
\cite{key-9.1} If $A\subset\mathbb{Z}\text{(or }\mathbb{R}\text{)}$
is any finite set, then for every $\epsilon>0$, we should have 
\[
\vert A+A\vert+\vert A\cdot A\vert\gg\vert A\vert^{2-\epsilon}.
\]
\end{conjecture}

The best known upper bound till date is due to Konyagin and Shkredov
\cite{key-30}, and the proof is based on a breakthrough paper of
Solymosi \cite{key-13}, saying that
\[
\vert A+A\vert+\vert A\cdot A\vert\gg\vert A\vert^{\frac{4}{3}+c},
\]
where $c<\frac{5}{9813}.$ In \cite{key-11}, A. Fish asked the following
question, which is a twisted version of the above problems. 
\begin{problem}
\cite[Question 1]{key-11} For a given infinite set $E\subset\mathbb{Z},$
how much structure does possess the set $\left(E-E\right)\cdot\left(E-E\right)?$
\end{problem}

Before we proceed further, let us recall the definition of the upper
Banach density of a set.
\begin{defn}
\textbf{(Upper Banach density)}
\begin{enumerate}
\item For any set $E\subseteq\mathbb{Z}$, the upper Banach density of $E$
is 
\[
d^{\star}\left(E\right)=\limsup_{N\rightarrow\infty}\sup_{a\in\mathbb{Z}}\frac{\vert E\cap\left\{ a,a+1,\ldots,a+(N-1)\right\} \vert}{N}.
\]
 
\item For any set $E\subseteq\mathbb{N}$, the upper Banach density of $E$
is 
\[
d^{\star}\left(E\right)=\limsup_{N\rightarrow\infty}\sup_{a\in\mathbb{N}}\frac{\vert E\cap\left\{ a,a+1,\ldots,a+(N-1)\right\} \vert}{N}.
\]
\end{enumerate}
\end{defn}

In \cite{key-31}, M. Bj\H{o}rklund and A. Fish proved that for any
three set $A,B$ and $C$ with positive upper Banach density, there
exists $k\geq1$ such that $k\cdot\mathbb{Z}\subseteq\left(A-A\right)\cdot\left(B-B\right)-\left(C-C\right)^{2}$, which was improved by A. Fish in \cite{key-11}, that we will discuss in subsection 1.1.
%In \cite{key-11}, A. Fish improved this result which is the following.
%\begin{thm}
%\label{fish theorem} \cite[Corollary 1.1]{key-11} For any two sets $E,F$ of positive upper Banach
%density, there exists $k\geq1$ such that $k\cdot\mathbb{Z}\subseteq\left(E-E\right)\cdot\left(F-F\right).$
%\end{thm}

H. Furstenberg \cite{key-2} found a connection between the difference
of sets of positive upper Banach density and the return time of a
set with positive measure in measurable dynamical system. Before further discussion,  let us recall some preliminaries from measurable dynamical
system.

\subsection{Ergodic foundation}

Let $\left(X,\mathcal{B},\mu,T\right)$ be a measure preserving system
and $A\subseteq X$ be a measurable set. Denote by $R\left(A\right)$
the set of return time defined as  
$$R\left(A\right)=\left\{ n\in\mathbb{Z}:\mu\left(A\cap T^{-n}A\right)>0\right\}. $$
For any $A\subseteq X$ with $\mu\left(A\right)>0,$ we know by Poincar\'{e}'s
theorem that $R\left(A\right)\neq\emptyset.$ As a consequence of
Furstenberg correspondence principle \cite{key-2}, we have, for any subset $E\subseteq \mathbb{Z},$ with $d^{\star}\left(E\right)>0$,
 there exists a measurable dynamical system $\left(X,\mathcal{B},\mu,T\right)$
such that $A\in\mathcal{B}$ such that $d^{\star}\left(E\right)=\mu\left(A\right)$
and $R\left(A\right)\subseteq E-E$. In \cite{key-11}, A. Fish proved
the following theorem.
\begin{thm}
\label{fishtheo} \cite[Theorem 1.1]{key-11} Let $\left(X,\mu,T\right)$ and $\left(Y,\nu,S\right)$
be two measure preserving system and let $A\subset X,$ $B\subset Y$
be two measurable sets with $\mu\left(A\right)>0$ and $\nu\left(B\right)>0.$
Then there exist $k\in\mathbb{Z}$ such that $k\cdot\mathbb{Z}\subseteq R\left(A\right)\cdot R\left(B\right)$.
\end{thm}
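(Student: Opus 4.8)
The plan is to reduce the statement to a combinatorial fact about the multiplicative structure of return sets and then to solve a covering problem on a torus. \emph{First, I would describe each return set up to density zero.} The correlation sequence $n\mapsto\mu(A\cap T^{-n}A)=\langle\mathbf{1}_A,U_T^{\,n}\mathbf{1}_A\rangle$ is positive definite, hence by Herglotz's theorem equals $\widehat{\sigma_A}(n)$ for a positive measure $\sigma_A$ on $\mathbb{T}=\mathbb{R}/\mathbb{Z}$ with $\sigma_A(\mathbb{T})=\mu(A)>0$. Splitting $\sigma_A$ into its atomic part (supported on the eigenvalues $\theta_1,\dots,\theta_r$ of $T$, i.e.\ on the Kronecker factor) and its continuous part, Wiener's lemma shows the continuous part produces a sequence tending to $0$ in density, while the atomic part is almost periodic and bounded below on the Bohr$_0$ set $B_A=\{n:\|n\theta_j\|<\delta,\ 1\le j\le r\}$. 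Together with Khintchine's recurrence theorem this yields the structural conclusion $B_A\setminus Z_A\subseteq R(A)$ with $B_A$ a Bohr$_0$ set and $Z_A$ of density zero, and symmetrically $B_B\setminus Z_B\subseteq R(B)$, where I write $B_B=\{m:m\beta\in U_B\}$ for $\beta=(\beta_1,\dots,\beta_q)$ and $U_B\subseteq\mathbb{T}^q$ a neighbourhood of $0$.

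\emph{Next, I would choose $k$ and set up a factorization.} It suffices to find $k$ with $k\mathbb{Z}\subseteq(B_A\setminus Z_A)\cdot(B_B\setminus Z_B)$, and I would try to realize a target $N=kt$ as $N=d\cdot\big((k/d)t\big)$, where $d$ is a divisor of $k$ lying in $R(A)$ and the cofactor is pushed into $R(B)$. Since $k$ is free, using Weyl equidistribution of $n\mapsto(n\theta_1,\dots,n\theta_r)$ I can fix finitely many integers $d_1,\dots,d_s\in R(A)=B_A\setminus Z_A$, declare $k$ a common multiple of them, and arrange the complementary multipliers $c_i=k/d_i$ so that the dilated neighbourhoods $\bigcup_i (m\mapsto c_i m)^{-1}(U_B)$ cover the frequency torus of $B_B$. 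This is the multiplicative analogue of covering a circle by finitely many dilates of a fixed arc, and it guarantees that for \emph{every} integer $t$ there is some $i$ with $c_i t\in B_B$; note that the factor $d_i$ already lies in $R(A)$ by construction.

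\emph{The hard part is the density-zero exceptional sets.} The previous step only places the cofactor in $B_B$, whereas membership in $R(B)$ also requires $c_i t\notin Z_B$. For targets $N$ with many divisors this is automatic, since the number of admissible factorizations grows while $Z_B$ is negligible; the dangerous case is $N=kp$ with $p$ a large prime, where the admissible cofactors run through the sparse multiplicative fibre $\{c_i p\}_i$ and, a priori, the weakly mixing part of $S$ could arrange that $Z_B$ swallows the whole fibre. Breaking this conspiracy is, I expect, the crux of the proof. The natural remedy is to inject equidistribution of the primes: along the Kronecker (and, if needed, a nilpotent) factor the sequences $p\mapsto c_i p\,\theta_j$ are equidistributed by Vinogradov's theorem, so the weakly mixing correlations cannot stay large along an entire fibre $c_i\mathbb{P}$, forcing $c_i t\notin Z_B$ for at least one covering index $i$. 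Combining this with the covering of the second step produces, for every $t$, an admissible factorization $N=nm$ with $n\in R(A)$ and $m\in R(B)$, i.e.\ $k\mathbb{Z}\subseteq R(A)\cdot R(B)$; an alternative, and possibly cleaner, route to the same conclusion would phrase the entire argument in $\beta\mathbb{Z}$ via idempotent ultrafilters, recasting the covering and the density-zero control as statements about the multiplicative ideal structure of the Stone--Čech compactification.
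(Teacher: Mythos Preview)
The paper does not give its own proof of this statement; Theorem~1 is quoted from Fish~\cite{key-1} and left unproved. What the paper \emph{does} prove is Theorem~\ref{main}: if $A$ is $IP^\star$ and $B$ is $IP_r^\star$ then $k\cdot\mathbb{N}\subseteq A\cdot B$ for some $k$. Since return-time sets $R(A),R(B)$ are $IP_r^\star$ (pigeonhole on $A,T^{-x_1}A,\dots$) and every $IP_r^\star$ set is $IP^\star$, Theorem~\ref{main} immediately recovers Fish's theorem. That argument is a dozen lines of pure combinatorics: for each $x$ pick $p(x)\in FS(\{1,\dots,r\})$ with $x\cdot p(x)\in B$; show $m\cdot A$ is $IP^\star$ whenever $A$ is; take $k\in\bigcap_{m\in FS(\{1,\dots,r\})} m\cdot A$ (a finite intersection of $IP^\star$ sets) and factor $kx=\dfrac{k}{p(x)}\cdot x\,p(x)\in A\cdot B$. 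No spectral theory, no Bohr sets, no exceptional sets.

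Your route is entirely different---it stays inside ergodic theory and is close in spirit to Fish's original proof rather than to anything in this paper---but it has a real gap precisely where you flag ``the hard part''. The structural step $R(A)\supseteq B_A\setminus Z_A$ and the torus-covering idea are fine, but you never actually dispose of the density-zero sets $Z_A,Z_B$. Your appeal to Vinogradov is only a hope (``I expect''): equidistribution of $p\mapsto c_i p\,\beta_j$ is an \emph{averaged} statement over primes and says nothing about a single target $N=kp$, where you need at least one of the finitely many cofactors $c_1p,\dots,c_sp$ to miss $Z_B$. A generic density-zero set can certainly swallow any fixed finite fibre, so without further input the argument stalls. The ultrafilter alternative you float at the end is much closer to what actually works, and is essentially the mechanism the paper uses: the nonemptiness of $\bigcap_m m\cdot A$ for $IP^\star$ sets $A$ is exactly the idempotent-ultrafilter fact that replaces your covering-plus-exceptional-set analysis in one stroke and makes the $Z$-sets irrelevant.
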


As a consequence of Theorem \ref{fishtheo}, and Furstenberg correspondence
principle, we have the following corollary.
\begin{cor}\label{iimm}
If $E_{1}$ and $E_{2}$ are two subsets of positive density of $\mathbb{Z}$
, then there exists $k\in\mathbb{Z}$ such that $k\cdot\mathbb{Z}\subseteq\left(E_{1}-E_{1}\right)\cdot\left(E_{2}-E_{2}\right).$ 
\end{cor}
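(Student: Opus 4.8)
The plan is to derive the corollary from the Theorem by means of the Furstenberg correspondence principle, which turns a positive-density subset of $\mathbb{Z}$ into a measure preserving system together with a positive-measure set whose return-time set is contained in the difference set. Thus the entire statement reduces to producing the right systems and then quoting the Theorem.

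First I would recall the correspondence principle in the following form: if $E\subseteq\mathbb{Z}$ has positive upper Banach density $d^{*}(E)>0$, then there exist a measure preserving system $\left(X,\mu,T\right)$ and a measurable set $A\subseteq X$ with $\mu(A)=d^{*}(E)>0$ such that for every $n\in\mathbb{Z}$,
\[
\mu\left(A\cap T^{-n}A\right)\leq d^{*}\left(E\cap\left(E-n\right)\right).
\]
(Here ``positive density'' in the statement is read as positive upper Banach density, which is implied by positive density, so no generality is lost.) The key consequence I would extract is an inclusion of the return-time set into the difference set: if $n\in R(A)$, that is $\mu\left(A\cap T^{-n}A\right)>0$, then the inequality forces $d^{*}\left(E\cap\left(E-n\right)\right)>0$, hence $E\cap\left(E-n\right)\neq\emptyset$. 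Choosing $x\in E$ with $x+n\in E$ gives $n=\left(x+n\right)-x\in E-E$, so that $R(A)\subseteq E-E$.

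Applying this construction to $E_{1}$ yields a system $\left(X,\mu,T\right)$ and a set $A$ with $\mu(A)>0$ and $R(A)\subseteq E_{1}-E_{1}$, and applying it to $E_{2}$ yields a system $\left(Y,\nu,S\right)$ and a set $B$ with $\nu(B)>0$ and $R(B)\subseteq E_{2}-E_{2}$. Invoking the Theorem, there exists $k\in\mathbb{Z}$ with $k\cdot\mathbb{Z}\subseteq R(A)\cdot R(B)$, and chaining the two inclusions gives
\[
k\cdot\mathbb{Z}\subseteq R(A)\cdot R(B)\subseteq\left(E_{1}-E_{1}\right)\cdot\left(E_{2}-E_{2}\right),
\]
which is exactly the claim. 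The only point requiring genuine care — and the step I expect to be the main obstacle — is the density bookkeeping in the correspondence, namely verifying the displayed inequality $\mu\left(A\cap T^{-n}A\right)\leq d^{*}\left(E\cap\left(E-n\right)\right)$ with the correct notion of density, since it is precisely this inequality that guarantees $R(A)\subseteq E-E$ rather than merely a related approximate inclusion; everything else is a direct application of the Theorem.
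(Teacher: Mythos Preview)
Your argument is correct and is precisely the standard route from Theorem~1 to the corollary: build, via the Furstenberg correspondence principle, systems $(X,\mu,T)$ and $(Y,\nu,S)$ with positive-measure sets $A,B$ satisfying $R(A)\subseteq E_{1}-E_{1}$ and $R(B)\subseteq E_{2}-E_{2}$, and then quote the theorem. The paper itself does not supply a proof of this corollary at all; it simply records it with the phrase ``As a consequence it follows that,'' treating it as the known combinatorial reformulation of Fish's result in \cite{key-1}. Your write-up is exactly the argument one would give to unpack that sentence, so there is nothing to compare beyond noting that the paper leaves the correspondence-principle step implicit while you spell it out.
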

The Corollary \ref{iimm} immediately improves the result of M. Bj\H{o}rklund and A. Fish \cite{key-31}.

Let $\mathcal{P}_{f}\left(\mathbb{N}\right)$ be the collection of
nonempty finite subsets of $\mathbb{N}.$ The following notion of
$IP$, $IP_{r}$ and $\Delta_{r}$ sets will be necessary in our work.
\begin{defn}
\label{defn ip} ($IP$, $IP_{r}$ and $\varDelta_{r}$ sets) If $\left(S,+\right)$
be a commutative semigroup, then
\begin{enumerate}
\item for any sequence $\langle y_{n}\rangle_{n\in\mathbb{N}}$, let 
\[
FS\left(\langle y_{n}\rangle_{n\in\mathbb{N}}\right)=\left\{ \sum_{t\in H}y_{t}:H\in\mathcal{P}_{f}\left(\mathbb{N}\right)\right\} ,
\]
\item for any $n\in\mathbb{N}$ and sequence $\langle y_{i}\rangle_{i=1}^{n}$,
let 
\[
FS\left(\langle y_{i}\rangle_{i=1}^{n}\right)=\left\{ \sum_{t\in H}y_{t}:H\subseteq\left\{ 1,2,\ldots,n\right\} \right\} .
\]
\item A set $A$ is said to be an $IP$ set if there exists a sequence $\langle x_{n}\rangle_{n\in\mathbb{N}}$
such that $A=FS\left(\langle x_{n}\rangle_{n\in\mathbb{N}}\right).$ 
\item A set $A$ is said to be an $IP_{r}$ set if there exists a sequence
$\langle y_{i}\rangle_{i=1}^{r}$ such that $A=FS\left(\langle y_{i}\rangle_{i=1}^{r}\right).$
\item A set $A\subset\mathbb{N}$ is said to be a $\varDelta_{r}$ set if
there exists a set $S\subset\mathbb{N}$ with $\vert S\vert=r$ such
that $A=\left\{ s-t:s>t\text{ and }s,t\in S\right\} .$
\end{enumerate}
\end{defn}

Suppose $\mathcal{F}$ is a family of sets. A set $A$ is said to
be $\mathcal{F}^{\star}$ set if $A\cap F\neq\emptyset$ for all $F\in\mathcal{F}.$
We will call a set is $IP^{\star}$( $IP_{r}^{\star}$ and $\varDelta_{r}^{\star}$ resp.)
if it intersects all the $IP$ sets ( $IP_{r}$ sets and $\varDelta_{r}$
sets resp.). Note that every $IP_{r}$ set contains $\varDelta_{r}$
set. To check this, let $FS\left(\langle x_{n}\rangle_{n=1}^{r}\right)$
be an $IP_{r}$ set and let
\[
S=\left\{ x_{1},x_{1}+x_{2},\ldots,x_{1}+x_{2}+\cdots+x_{n}\right\} ,
\]
Now $FS\left(\langle x_{n}\rangle_{n=1}^{r}\right)$ contains elements
of the form $\left\{ s-t:s>t\text{ and }s,t\in S\right\} $. Hence
every $\varDelta_{r}^{\star}$ set is $IP_{r}^{\star}$.

For details the readers may see the book \cite{key-29}. 
\begin{example}
\label{returntimesip*} For any set $A\subseteq X$ with $\mu\left(A\right)>0,$
let $r=\frac{1}{[\mu\left(A\right)]}+1.$ As $\mu\left(\cup_{i=1}^{r}T^{-i}\left(A\right)\right)\leq1$,
there exist distinct $i,j\in\left\{ 1,2,\ldots,r\right\} $ with $i>j$
such that $\mu\left(T^{-i}\left(A\right)\cap T^{-j}\left(A\right)\right)>0,$
i.e., $i-j\in R\left(A\right).$ Hence $R\left(A\right)$ is a  $\varDelta_{r}^{\star}$ set
and so an $IP_{r}^{\star}$ set.
\end{example}

Before proceed let us recall some conjectures and basic results on
$\mathbb{P}-\mathbb{P}$.

\subsection{A brief introduction to $\mathbb{P}-\mathbb{P}$ }

In $1905,$ Maillet \cite{key-33} conjectured that the set of the
difference of primes should contain all even numbers.
\begin{conjecture}
\cite{key-33} Every even number is the difference of two primes.
\end{conjecture}

Originally before Maillet, there were two stronger forms of this conjecture.
In $1901$, Kronecker \cite{key-12} made the following conjecture.
\begin{conjecture}
\cite{key-12} Every even number can be expressed in infinitely many
ways as the difference of two primes.
\end{conjecture}

In $1849$, Polignac \cite{key-18} conjectured the following which
is the most general one.
\begin{conjecture}
\cite{key-18} Every even number can be written in infinitely many
ways as the difference of two consecutive primes.
\end{conjecture}

Based on \cite{key-19}, Zhang \cite{key-21} made a recent breakthrough
and proved that there exists an even number not more than $7\times107$
which can be expressed in infinitely many ways as the difference of
two primes. Soon after, Maynard and Tao \cite{key-26,key-27} reduced
the limit of such an even number to not more than $600$. The best
known result now is not more than $246$; for details see \cite{key-26}.
The following theorem is due to Huang and Sheng Wu \cite{key-3}, an outstanding application of Pigeonhole principle and 
Zhang-Maynard-Tao theorem \cite[Theorem 3.1]{key-3}. 
\begin{thm}
\label{p-p} There exists $r\in\mathbb{N}$ such that $\mathbb{P}-\mathbb{P}$
is an $\varDelta_{r}^{\star}$ set. 
\end{thm}

Hence it is an $IP_{r}^{\star}$ set. As the set of primes has density
$0,$ we can't say nothing about $\left(\mathbb{P}-\mathbb{P}\right)\cdot\left(\mathbb{P}-\mathbb{P}\right)$
. 

We will use theorem \ref{p-p} to
deduce the following result.
\begin{thm}
\label{p-pmain} There exists $k\in\mathbb{N}$ such that $k\cdot\mathbb{N}\subseteq\left(\mathbb{P}-\mathbb{P}\right)\cdot\left(\mathbb{P}-\mathbb{P}\right).$ 
\end{thm}

Let us recall some basic preliminaries of algebra of ultrafilters,
which will be helpfull for us.

\subsection{A brief review of topological algebra}

In this subsection we will recall some basic preliminaries of the algebra
of ultrafilters, which we will use to deduce some corollaries. For
details the readers can see the beautiful book on algebra of ultrafilters
\cite{key-29} and a short review \cite[Chapter 2]{key-30}.  Denote by $\beta\mathbb{N}$,
the Stone-\v{C}ech compactification of $\mathbb{N}$. It can
be shown that $\beta\mathbb{N}$ is the set of all ultrafilters over
$\mathbb{N}$, where the points of $\mathbb{N}$ are identified with
the principle ultrafilters. The basis for the topology is $\left\{ \bar{A}:A\subseteq\mathbb{N}\right\} $,
where $\bar{A}=\left\{ p\in\beta\mathbb{N}:A\in p\right\} $. The
operation of $\mathbb{N}$ can be extended to $\beta\mathbb{N}$ making
$\left(\beta\mathbb{N},+\right)$ a compact, right topological semigroup.
%with $\mathbb{N}$ contained in its topological center. 
%That is, for
%all $p\in\beta\mathbb{N}$, the function $\rho_{p}:\beta\mathbb{N}\rightarrow\beta\mathbb{N}$
%is continuous, where $\rho_{p}\left(q\right)=q+p$ and for all $x\in\mathbb{N}$,
%the function $\lambda_{x}:\beta\mathbb{N}\rightarrow\beta\mathbb{N}$
%is continuous, where $\lambda_{x}\left(q\right)=x+q$. 
For $p,q\in\beta\mathbb{N}$
and $A\subseteq\mathbb{N}$, $A\in p+q$ if and only if $\left\{ x\in\mathbb{N}:-x+A\in q\right\} \in p$,
where $-x+A=\left\{ y\in\mathbb{N}:x+y\in A\right\} $. In \cite{key-28},
Ellis proved that every compact right topological semigroup contains
idempotents. Note if $A\in p=p+p$, there exists a sequence $\langle x_{n}\rangle_{n\in\mathbb{N}}$
such that $FS\left(\langle x_{n}\rangle_{n\in\mathbb{N}}\right)\subseteq A.$
In fact the converse is also true. That means if $A$ contains an
$IP$ set, then there exists an idempotent $p$ such that $A\in p.$
So, a set $A$ is $IP^{\star}$ if and only if $A\in p$ for all idempotents
$p\in\beta\mathbb{N}$.
% \cite[Chapter 2, Lemma 4.4]{key-30}. 
\begin{rem}
\label{intersect} For any $n\in\mathbb{N},$ let $A_{1},A_{2},\ldots,A_{n}$
be $IP^{\star}$ sets. Then for each idempotents $p\in\beta\mathbb{N}$,
$A_{i}\in p$ for all $i\in\left\{ 1,2,\ldots,n\right\} .$ So, $\cap_{i=1}^{n}A_{i}\in p$
for all idempotents $p\in\beta\mathbb{N}.$ Hence $\cap_{i=1}^{n}A_{i}$
is an $IP^{\star}$ set.
\end{rem}

The following theorem is our main result.
\begin{thm}
\label{main theorem} Let $r\in\mathbb{N}$ and let $A,B\subseteq\mathbb{N}$
be $IP^{\star}$ set and $IP_{r}^{\star}$ sets respectively. Then
there exists $k\in A$ such that $k\cdot\mathbb{N}\subseteq A\cdot B.$ 
\end{thm}

\section{Our results}

The following lemma will be necessary for the proof of our main theorem.
\begin{lem}
\label{ipintersect} Let $m\in\mathbb{N}$ and let $A\subseteq\mathbb{N}$
be an $IP^{\star}$ set. Then $m\cdot A$ is also an $IP^{\star}$
set.
\end{lem}

\begin{proof}
Let $\langle x_{n}\rangle_{n\in\mathbb{N}}$ be any sequence. For
each 
$i\in\left\{ 0,1,\ldots,m-1\right\}$, let 
$$x_{i}\equiv i^{\prime}(\mod m),$$
where $i^{\prime}\in\left\{ 0,1,\ldots,m-1\right\} .$ Now pick $H_{1}$ (consider
$H_{1}$ to be the collection of those $i$'s such that all $i^{\prime}$s are same) such that $m\vert\sum_{t\in H_{1}}x_{t}.$ Now
continue this process to obtain a disjoint sequences $\langle H_{n}\rangle_{n\in\mathbb{N}}$
of the finite subsets of $\mathbb{N}$ such that $m\vert\sum_{t\in H_{n}}x_{t}$
for each $n\in\mathbb{N}$. Now choose a new sequence $\langle y_{n}\rangle_{n\in\mathbb{N}}$
such that $y_{n}=\frac{1}{m}\sum_{t\in H_{n}}x_{t}$ for each $n\in\mathbb{N}$.
Then $A\cap FS\left(\langle y_{n}\rangle_{n\in\mathbb{N}}\right)\neq\emptyset$
and this implies $m\cdot A\cap FS\left(\langle x_{n}\rangle_{n\in\mathbb{N}}\right)\neq\emptyset$.
This proves the lemma.
\end{proof}
Now we are ready to prove our main theorem.
\begin{proof}[\textbf{Proof of Theorem \ref{main theorem}:}]
 Let $B$ be an $IP_{r}^{\star}$ set. Then for any $x\in\mathbb{N}$,
there exists $p\left(x\right)\in FS\left(\left\{ 1,2,\ldots,r\right\} \right)$
such that $x\cdot p\left(x\right)\in B.$ Now from Lemma \ref{ipintersect},
$m\cdot A$ are also $IP^{\star}$ sets for each $m\in FS\left(\left\{ 1,2,\ldots,r\right\} \right)$.

Now choose $k\in A\cap\bigcap_{m\in FS\left(\left\{ 1,2,\ldots,r\right\} \right)}m\cdot A.$
Then $\frac{k}{m}\in A$ for each $m\in FS\left(\left\{ 1,2,\ldots,r\right\} \right)$,
which implies $\frac{k}{p\left(x\right)}\in A$ for each $x\in\mathbb{N}$.
Hence $k\cdot x=\frac{k}{p\left(x\right)}\cdot xp\left(x\right)\in A\cdot B$
for each $x\in\mathbb{N}$. 

This completes the proof.
\end{proof}
Now from Theorem \ref{p-p} and Theorem \ref{main theorem}, we have
our main Theorem \ref{p-pmain}. From Example \ref{returntimesip*},
we have the following interesting result.
\begin{cor}
If $A\subseteq\mathbb{N}$ is a set of positive density, then there
exists $k_{1}\in\mathbb{N}$ such that $k_{1}\cdot\mathbb{N}\subseteq\left(\mathbb{P}-\mathbb{P}\right)\cdot\left(A-A\right).$
\end{cor}

A close analysis of proof of Theorem \ref{main theorem} shows that
\[
C=A\cap\bigcap_{m\in FS\left(\left\{ 1,2,\ldots,r\right\} \right)}m\cdot A
\]
 is an $IP^{\star}$ set (from Remark \ref{intersect}), i.e. $C\in p$ for all idempotent $p\in\beta\mathbb{N}.$
Then there exists a sequence $\langle x_{n}\rangle_{n\in\mathbb{N}}$
such that $FS\left(\langle x_{n}\rangle_{n\in\mathbb{N}}\right)\subset C\in p.$
Hence we have the following corollary, which generalizes all the previous
results. 
\begin{cor}
Let $A,B\subseteq\mathbb{N}$ be two subsets of positive density and
let $\mathbb{P}$ be the set of primes. Then the following holds
\begin{enumerate}
\item there exists a sequences $\langle x_{n}\rangle_{n\in\mathbb{N}}$
such that
\[
FS\left(\langle x_{n}\rangle_{n\in\mathbb{N}}\right)\cdot\mathbb{N}\subseteq\left(A-A\right)\cdot\left(B-B\right),
\]
\item there exists a sequences $\langle y_{n}\rangle_{n\in\mathbb{N}}$
such that 
\[
FS\left(\langle y_{n}\rangle_{n\in\mathbb{N}}\right)\cdot\mathbb{N}\subseteq\left(\mathbb{P}-\mathbb{P}\right)\cdot\left(A-A\right),
\]
and
\item there exists a sequences $\langle z_{n}\rangle_{n\in\mathbb{N}}$
such that 
\[
FS\left(\langle z_{n}\rangle_{n\in\mathbb{N}}\right)\cdot\mathbb{N}\subseteq\left(\mathbb{P}-\mathbb{P}\right)\cdot\left(\mathbb{P}-\mathbb{P}\right).
\]
\end{enumerate}
\end{cor}

\subsection*{Acknowledgments}

We are very thankful to the anonymous referees for their helpful comments
on the previous draft of this manuscript.

%    Insert the bibliography data here.

\end{document}